\theoremstyle{plain}
\newtheorem{theorem}{Theorem}[section]
\newtheorem{lemma}[theorem]{Lemma}
\theoremstyle{remark}
\newtheorem{example}[theorem]{Example}
\newtheorem{remark}[theorem]{Remark}
\theoremstyle{definition}
\newtheorem{definition}[theorem]{Definition}
\newlength{\JZHeightOfX}
\newcommand{\JZOrcidlink}[1]{
\setlength{\JZHeightOfX}{\fontcharht\font`X}
\includegraphics[height=\JZHeightOfX]{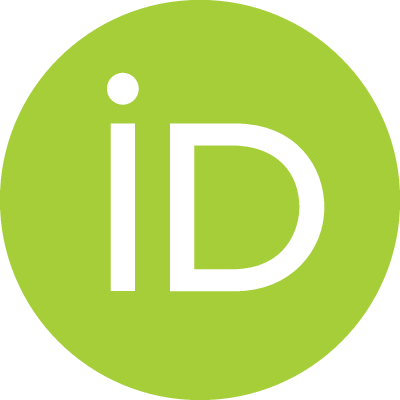}
\href{https://orcid.org/#1}{#1}
}
\begin{document}
\author{
Jan Fischer\\
\small{Faculty of Mathematics and Computer Science, Friedrich-Schiller-University, Jena, Germany}\\
Jobst Ziebell\hspace{2em}\JZOrcidlink{0000-0002-9715-6356}\\
\small{Faculty of Mathematics and Computer Science, Friedrich-Schiller-University, Jena, Germany}
}
\title{Tikhonov Well-Posedness and Differentiability on Asymmetrically Normed Spaces}
\date{\today}
\maketitle

\begin{abstract}
On normed vector spaces there is a well-known connection between the Tikhonov well-posedness of a minimisation problem and the differentiability of an associated convex conjugate function.
We show how this duality naturally generalises to the setting of asymmetrically normed spaces and prove a universal differentiability property of the convex conjugate of the cumulant-generating function of a mean-zero measure on a locally convex space.
\end{abstract}
\section{Introduction}
Given a normed space $X$ and a function $g : X \to (-\infty, \infty]$, the minimisation problem $(g,X)$ is Tikhonov well-posed if $g$ attains its infimum at some point $x \in X$ and, additionally, every minimising sequence of $g$ converges to $x$.
In \cite[Theorem 3.9.1]{src:Zalinescu:ConvexAnalysisInGeneralVectorSpaces} it is shown that this property is intimately related to differentiability properties of an associated convex conjugate function (also referred to as Fenchel conjugate or Legendre transform).
Closely related results are found in \cite{src:Soloviov,src:AsplundRockafellar}.

This property can also be studied on asymetrically normed spaces, which requires the extension of the convex conjugate operation to asymmetrically normed spaces and normed cones respectively.
The authors are unaware of prior work in this direction.
There is however a recent study on nonlinear Fenchel conjugates \cite{src:NonlinearFenchelConjugates} extending some ideas and properties of convex conjugation to functions on arbitrary sets.
\section{Preliminaries}
\subsection{Cones and Asymmetrically Normed Spaces}
As in \cite{src:Valero:QuotientNormedSpace} we define a \textbf{cone} as a set $X$ together with two operations $(+,\cdot)$ such that $(X,+)$ is an abelian monoid and $\cdot : \mathbb{R}_{\ge 0} \times X \to X$ such that
\begin{enumerate}[(i)]
\item $r \cdot \left( s \cdot x \right) = \left( r s \right) \cdot x$
\item $r \cdot \left( x + y \right) = r \cdot x + r \cdot y$
\item $\left( r + s \right) \cdot x = r \cdot x + s \cdot x$
\item $1 \cdot x = x$
\item $0 \cdot x = x$
\end{enumerate}
for all $r, s \ge 0$ and $x, y \in X$.
In particular, we shall consider a \textbf{normed cone} as defined in \cite{src:Valero:QuotientNormedSpace}, i.e. as a pair $(X, p)$ of a cone and a function $p : X \to \mathbb{R}_{\ge 0}$ satisfying
\begin{enumerate}[(i)]
\item $p \left( x \right) = 0 \iff x = 0$
\item $p \left( r x \right) = r p \left( x \right)$
\item $p \left( x + y \right) \le p \left( x \right) + p \left( y \right)$
\end{enumerate}
for all $r \ge 0$ and all $x, y \in X$.
We also follow \cite{src:Cobzaş:FunctionalAnalysisinAsymmetricNormedSpaces} and define an \textbf{asymmetric norm} on a linear space $X$ as a function $p : X \to \mathbb{R}_{\ge 0}$ satisfying
\begin{enumerate}[(i)]
	\item $x = 0 \iff p \left( x \right) = p \left( -x \right) = 0$
	\item $p \left( r x \right) = r p \left( x \right)$
	\item $p \left( x + y \right) \le p \left( x \right) + p \left( y \right)$
\end{enumerate}
for all $r \ge 0$ and all $x, y \in X$.
We will then refer to the pair $(X,p)$ as an \textbf{asymmetrically normed space}.
Note that an asymmetrically normed space is in general not a normed cone.

In both the linear and the conic setting a neighbourhood basis of the induced topology at a point $x \in X$ is simply given by the collection of sets of the form $B_r(x) = x + r p^{-1}([0,1))$ for all $r > 0$.
Clearly, $X$ is first-countable.
Given an asymmetrically normed space $(X, p)$, we may also talk about the \textbf{conjugate} space $\bar{X}$ which is given as the corresponding tuple $(X, \bar{p})$ where $\bar{p}(x) = p(-x)$ for all $x \in X$.
Clearly, $\bar{X}$ is also an asymmetrically normed space.
It is easy to see that $\max\{p, \bar{p}\}$ is an ordinary norm on the space $X$ and we shall refer to the corresponding normed space as $X_s$ (\enquote{s} for \emph{symmetric}).

As is common for normed spaces, we shall also write $\Vert \cdot \Vert$ for the norm of a normed cone or an asymmetrically normed space.
Given an asymmetrically normed space $(X, \Vert \cdot \Vert_X)$ we shall also define its \textbf{dual cone} $X^*$ as the space of all real-valued upper semicontinuous linear functionals on $X$.
We equip $X^*$ with the functional $\Vert \cdot \Vert_{X^*} : X^* \to \mathbb{R}_{\ge 0}$ given by
\begin{equation}
\phi \mapsto \sup_{x \in B_1(0)} \phi \left( x \right)
\end{equation}
for all $\phi \in X^*$.
This turns $X^*$ into a normed cone since $B_1(0)$ is absorbing in $X$.
It is shown in \cite{src:Cobzaş:FunctionalAnalysisinAsymmetricNormedSpaces} that a linear real-valued functional $\phi : X \to \mathbb{R}$ is in $X^*$ if and only if there exists a $C \ge 0$ such that $\phi(x) \le C \Vert x \Vert_X$, i.e. if and only if $\phi$ is bounded from above on the unit ball of $X$.
Moreover, using the Hahn-Banach extension theorem as in \cite[Theorem 2.2.2]{src:Cobzaş:FunctionalAnalysisinAsymmetricNormedSpaces}, one can show that for every $x \in X$ with $\Vert x \Vert_X > 0$ there is a $\phi \in X^*$ with $\Vert \phi \Vert = 1$ such that $\phi(x) = \Vert x \Vert_{X^*}$.
It follows that for every $x \in X$, we have
\begin{equation}
\label{eq:NormByDualFunctionalsIsAttained}
\left\Vert x \right\Vert_X = \max_{\substack{\phi \in X^*\\ \Vert \phi \Vert_{X^*} \le 1}} \phi \left( x \right) \, .
\end{equation}
There is also an obvious interplay with the dual space $\bar{X}^*$ of the conjugate space, namely $X^* = - \bar{X}^*$.
\begin{definition}
Let $X$ be an asymmetrically normed space.
Then every $x \in X$ generates a linear evaluation map $\mathrm{ev}_x : X_s^* \to \mathbb{R}, \phi \mapsto \phi(x)$.
\end{definition}
\begin{lemma}
\label{lem:EvaluationMap}
For every $x$ in an asymmetrically normed space $X$, the evaluation map $\mathrm{ev}_x$ restricted to $X^*$ or $\bar{X}^*$ is continuous.
\end{lemma}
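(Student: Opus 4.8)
The plan is to reduce everything to one elementary norm estimate on the dual cone. Recall from the preliminaries that a linear functional belongs to $X^*$ precisely when it is bounded from above on $B_1(0)$, and that the best such constant is $\Vert\cdot\Vert_{X^*}$; hence $\phi(y)\le\Vert\phi\Vert_{X^*}\Vert y\Vert_X$ for every $\phi\in X^*$ and $y\in X$ (this is also immediate from \eqref{eq:NormByDualFunctionalsIsAttained} by homogeneity). Applying this with $y$ and with $-y$, and using $\Vert -y\Vert_X=\bar p(y)$, gives
\[
\lvert\phi(y)\rvert\le\Vert\phi\Vert_{X^*}\,\max\{p(y),\bar p(y)\}=\Vert\phi\Vert_{X^*}\,\Vert y\Vert_{X_s}\qquad(\phi\in X^*,\ y\in X).
\]
In particular $X^*\subseteq X_s^*$ (and likewise $\bar X^*\subseteq X_s^*$), so $\mathrm{ev}_x$ really can be restricted to these cones and the statement makes sense.

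To get continuity at a point $\phi_0\in X^*$ I would fix $\varepsilon>0$ and set $r:=\varepsilon/(1+\Vert x\Vert_{X_s})$. Every element of the basic neighbourhood $B_r(\phi_0)=\phi_0+r\,(\Vert\cdot\Vert_{X^*})^{-1}([0,1))$ has the form $\phi_0+\psi$ with $\psi\in X^*$ and $\Vert\psi\Vert_{X^*}<r$, and then
\[
\lvert\mathrm{ev}_x(\phi_0+\psi)-\mathrm{ev}_x(\phi_0)\rvert=\lvert\psi(x)\rvert\le\Vert\psi\Vert_{X^*}\,\Vert x\Vert_{X_s}<\varepsilon .
\]
Thus $\mathrm{ev}_x$ maps $B_r(\phi_0)$ into the $\varepsilon$-ball around $\mathrm{ev}_x(\phi_0)$, which is continuity, since the sets $B_r(\phi_0)$ form a neighbourhood basis in the normed cone $X^*$ (equivalently, one may argue sequentially, $X^*$ being first-countable).

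For $\bar X^*$ I would run the identical argument with the roles of $p$ and $\bar p$ exchanged, or simply note that $X^*=-\bar X^*$, that $\phi\mapsto-\phi$ is an isometric isomorphism of normed cones between the two, and that $\mathrm{ev}_x(-\phi)=-\mathrm{ev}_x(\phi)$ with $t\mapsto-t$ continuous on $\mathbb{R}$, so continuity transfers. I do not anticipate a genuine obstacle here; the only point requiring care is the asymmetry — a bound on $\phi(x)$ by itself does not control $\lvert\phi(x)\rvert$, so one really needs the companion bound on $\phi(-x)$ coming from $\Vert -x\Vert_X=\bar p(x)$ — together with the fact that the operative topology on $X^*$ is the norm-cone topology generated by the balls $B_r(\phi_0)$ rather than any weak topology.
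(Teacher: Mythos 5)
Your proof is correct and rests on the same idea as the paper's: the defining bound $\phi(y)\le\Vert\phi\Vert_{X^*}\Vert y\Vert_X$ applied to both $x$ and $-x$, which is exactly how the paper obtains upper and then lower semicontinuity of $\mathrm{ev}_x$. You merely package the two one-sided estimates into the single Lipschitz bound $|\psi(x)|\le\Vert\psi\Vert_{X^*}\Vert x\Vert_{X_s}$ and argue with neighbourhoods rather than null sequences, which also lets you avoid the paper's case distinction $\Vert x\Vert_X=0$ versus $\Vert x\Vert_X\neq 0$; this is a cosmetic, not a substantive, difference.
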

\begin{proof}
Let $\phi \in X^*$ and $(\psi_n)_{n \in \mathbb{N}}$ be a null sequence in $X^*$.
If $\Vert x \Vert_{X} = 0$, we have $\psi_n(x) \le 0$ for all $n \in \mathbb{N}$, since every element in $X^*$ is bounded from above by some non-negative constant times $\Vert \cdot \Vert_{X}$.
Hence, $\limsup_{n \to \infty}  [ \phi( x ) + \psi_n( x ) ] \le \phi( x )$.
If $\Vert x \Vert_{X}  \neq 0$, we obtain
\begin{equation}
\begin{aligned}
\limsup_{n \to \infty} \left[ \phi \left( x \right) + \psi_n \left( x \right) \right]
&=
\phi \left( x \right) + \left\Vert x \right\Vert_{X} \limsup_{n \to \infty} \psi_n \left( \frac{x}{\left\Vert x \right\Vert_{X} } \right) \\
&\le
\phi \left( x \right) + \left\Vert x \right\Vert_{X} \limsup_{n \to \infty}  \left\Vert \psi_n \right\Vert_{X^*}
=
\phi \left( x \right) \, .
\end{aligned}
\end{equation}
Consequently, $\mathrm{ev}_x \upharpoonright X^*$ is upper semicontinuous.
To see the lower semicontinuity, note that by the upper semicontinuity of $\mathrm{ev}_{-x} \upharpoonright X^*$, we obtain
\begin{equation}
- \liminf_{n \to \infty} \left[ \phi \left( x \right) + \psi_n \left( x \right) \right]
=
\limsup_{n \to \infty} \left[ \phi \left( -x \right) + \psi_n \left( -x \right) \right]
\le
\phi \left( -x \right)
=
- \phi \left( x \right)
\end{equation}
and thus
\begin{equation}
\liminf_{n \to \infty} \left[ \phi \left( x \right) + \psi_n \left( x \right) \right]
\ge
\phi \left( x \right) \, .
\end{equation}
By exhcanging the roles of $X$ and $\bar{X}$ the same holds for $\mathrm{ev}_x \upharpoonright \bar{X}^*$.
\end{proof}
With the above \namecref{lem:EvaluationMap} in mind, we shall also simply write $x$ in place of $\mathrm{ev}_x$.
As a final topological notion, we say that a sequence $(x_n)_{n \in \mathbb{N}}$ \textbf{converges weakly} to $x \in X$ whenever
\begin{equation}
\limsup_{n \to \infty} \phi \left( x_n - x \right) \le 0
\end{equation}
for all $\phi \in X^*$ (see \cite[Proposition 2.4.33]{src:Cobzaş:FunctionalAnalysisinAsymmetricNormedSpaces}).
\subsection{Elements of Convex Analysis}
We follow the conventions of \cite{src:Zalinescu:ConvexAnalysisInGeneralVectorSpaces} straightforwardly adapted to the conic setting.
In particular, $\overline{\mathbb{R}} = \{ -\infty \} \cup \mathbb{R} \cup \{ \infty \}$ and a function $f : X \to \overline{\mathbb{R}}$ on a cone (e.g. linear space) $X$ is \textbf{convex} if its \textbf{epigraph}
\begin{equation}
\mathrm{epi}\, f = \left\{ (x, t) \in X \times \mathbb{R} : f \left( x \right) \le t \right\}
\end{equation}
is convex.
Moreover, $f$ is \textbf{proper} if $f(X) \subseteq (-\infty, \infty]$ and $f(X) \neq \{ \infty \}$.
On an asymmetrically normed space $X$ we define the \textbf{convex conjugate} $f^* : \bar{X}^* \to \overline{\mathbb{R}}$ of a function $f : X \to \overline{\mathbb{R}}$ as
\begin{equation}
f^* \left( \phi \right)
=
\sup_{x \in X} \left[ \phi \left( x \right) - f \left( x \right) \right]
\end{equation}
for all $\phi \in \bar{X}^*$.
Obviously, $f^*$ is convex and by \cref{lem:EvaluationMap} it is the supremum over a family of continuous functions and thus lower semicontinuous.
Given a function $g : \bar{X}^* \to \overline{\mathbb{R}}$, we may also consider its conjugate $g^* : X \to \overline{\mathbb{R}}$, that we define as
\begin{equation}
g^* \left( x \right)
=
\sup_{\phi \in \bar{X}^*} \left[ \phi \left( x \right) - g \left( \phi \right) \right]
\end{equation}
for all $x \in X$.
$g^*$ is clearly convex as well.
Moreover, since $\bar{X}^* = - X^*$, every $\phi \in \bar{X}^*$ is lower semicontinuous on $X$ such that $g^*$ is lower semicontinuous as the supremum over a family of lower semicontinuous functions.
It is also obvious that whenever $f_1 \le f_2$ it follows that $f_2^* \le f_1^*$ (regardless of whether $f_1, f_2$ are defined on $X$ or $\bar{X}^*$).
\begin{lemma}
\label{lem:DoubleConjugateIsSmaller}
Let $X$ be an asymmetrically normed space and $f : X \to \overline{\mathbb{R}}$.
Then $f^{**} \le f$.
\end{lemma}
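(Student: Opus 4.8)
The plan is to imitate the classical Fenchel--Moreau inequality, the only subtlety being the extended-real arithmetic. Fix $y \in X$. By the very definition of $f^*$, for every $\phi \in \bar{X}^*$ and every $x \in X$ we have $\phi(x) - f(x) \le f^*(\phi)$; specialising to $x = y$ yields $\phi(y) - f(y) \le f^*(\phi)$. Rearranging this into the form $\phi(y) - f^*(\phi) \le f(y)$ and then taking the supremum over all $\phi \in \bar{X}^*$ gives exactly $f^{**}(y) \le f(y)$. So the entire argument is a one-line interchange of the roles of $x$ and $y$ in the defining inequality of the conjugate.

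The step that requires genuine (if modest) care is the rearrangement $\phi(y) - f(y) \le f^*(\phi) \Rightarrow \phi(y) - f^*(\phi) \le f(y)$ in $\overline{\mathbb{R}}$, since subtraction is not unrestricted there. I would dispatch this by cases on the value of $f(y)$. If $f(y) = +\infty$, there is nothing to prove. If $f(y) = -\infty$, then $\phi(y) - f(y) = +\infty$ for every $\phi \in \bar{X}^*$ (note $\phi(y) \in \mathbb{R}$ since $\phi$ is a real-valued functional), hence $f^*(\phi) = +\infty$ for all $\phi$, so $\phi(y) - f^*(\phi) = -\infty$ and thus $f^{**}(y) = -\infty \le f(y)$. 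If $f(y) \in \mathbb{R}$, then from $\phi(y) - f(y) \le f^*(\phi)$ we may legitimately add $f(y)$ and subtract $f^*(\phi)$ (the latter being an element of $(-\infty, +\infty]$, with the case $f^*(\phi) = +\infty$ again making the rearranged inequality read $-\infty \le f(y)$), obtaining $\phi(y) - f^*(\phi) \le f(y)$ for every $\phi$, and the supremum over $\bar{X}^*$ preserves the bound.

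The only remaining point worth a remark is that $f^{**}$ is actually well defined as a supremum over a nonempty index set: the zero functional is upper semicontinuous, so $0 \in \bar{X}^* = -X^*$, and hence $\bar{X}^* \neq \emptyset$. No topological input beyond this is needed; in particular \cref{lem:EvaluationMap} is not used here, although it is what guarantees that $f^{**}$ lives on $X$ in the first place. I do not anticipate any real obstacle: the statement is the elementary half of the conjugacy correspondence, and the asymmetry of the norm plays no role other than forcing us to keep track of which space ($X$ versus $\bar{X}^*$) each conjugate is defined on.
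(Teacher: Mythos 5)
Your proof is correct and is essentially the paper's own argument: the paper writes $f^{**}(x) = \sup_{\phi}\inf_{y}[\phi(x)-\phi(y)+f(y)]$ and bounds the inner infimum by its value at $y=x$, which is exactly your Fenchel--Young rearrangement in sup--inf form. Your added case analysis on the extended-real arithmetic is a harmless (and slightly more careful) elaboration of the same step.
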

\begin{proof}
Letting $x \in X$, we obtain
\begin{equation}
\sup_{\phi \in \bar{X}^*} \left(
\phi \left( x \right)
-
\sup_{y \in X} \left[
\phi \left( y \right)
-
f \left( y \right)
\right]
\right)
=
\sup_{\phi \in \bar{X}^*} 
\inf_{y \in X} \left[
\phi \left( x \right)
-
\phi \left( y \right)
+
f \left( y \right)
\right]
\le
f \left( x \right) \, .
\end{equation}
\end{proof}
\subsection{Gâteaux and Fréchet Derivates}
We also need notions of Gâteaux and Fréchet derivates on normed cones.
While it may not immediately be clear how to extend the classical definitions in normed spaces to normed cones, there exists prior work in the setting of semilinear spaces that suggest promising definitions \cite{src:Galanis:DiffabilityOnSemilinearSpaces,src:Sadeqi:DiffabilityOnSemilinearSpaces}.
\begin{definition}
Let $f : X \to \overline{\mathbb{R}}$ be a function on a normed cone $X$.
Then $f$ is \textbf{right Gâteaux differentiable} at a point $x \in X$ if $f(x) \in \mathbb{R}$ and there exists some continuous and linear\footnote{\enquote{linear} translates to the usual compatibility with respect to all linear combinations with non-negative scalars. In the vector space setting this is equivalent to the usual meaning of the term.} $L : X^* \to \mathbb{R}$ such that
\begin{equation}
\lim_{ t  \searrow 0 } \frac{1}{t} \left[ f \left( x + t y \right) - f \left( x \right) - t L \left( y \right) \right] = 0
\end{equation}
for all $y \in X$.
In that case $L$ is called the \textbf{Gâteaux derivative} of $f$ at $x$ and is written as $L = Df(x)$.
Moreover $f$ is \textbf{right Fréchet differentiable} at a point $x \in X$ if it is Gâteaux differentiable at $x$ and
\begin{equation}
\lim_{ t  \searrow 0 } \sup_{\Vert y \Vert = t} \frac{1}{t} \left| f \left( x + y \right) - f \left( x \right) - D f \left( x \right) \left( y \right) \right| = 0 \, .
\end{equation}
Then $Df(x)$ is called the \textbf{right Fréchet derivative} of $f$ at $x$.
These derivatives directly generalise the analogous notions on normed spaces and it is clear that the derivative is unique whenever it exists.
\end{definition}
\section{The Fréchet Case}
The asymmetric case becomes an interesting generalisation of \cite[Theorem 3.9.1]{src:Zalinescu:ConvexAnalysisInGeneralVectorSpaces}.
\begin{theorem}
\label{thm:FrechetCase}
Let $(X, \Vert \cdot \Vert_X)$ be an asymmetrically normed space and $f : X \to \overline{\mathbb{R}}$ a proper function, $x \in X$ and $\phi \in \bar{X}^*$.
Setting $g = f - \phi$, consider the following statements:
\begin{enumerate}[(i)]
\item $f$ is lower semicontinuous at $x$ and $f^*$ is right Fréchet differentiable at $\phi$ with $D f^* (\phi) = x$.
Moreover, there exists a sequence $(y_n)_{n \in \mathbb{N}}$ such that
\begin{equation}
\lim_{n \to \infty} g(y_n) = \inf g(X) \qquad\text{and}\qquad \lim_{n \to \infty} y_n = x \text{ in } X, \text{i.e. } \lim_{n \to \infty} \left\Vert y_n - x \right\Vert_X = 0 \, .
\end{equation}
\item $g(x) =  \inf g(X)$ and for every sequence $(y_n)_{n \in \mathbb{N}}$ in $X$ we have
\begin{equation}
 \lim_{n \to \infty} g(y_n) = g(x) \implies \lim_{n \to \infty} y_n = x \text{ in } \bar{X}, \text{i.e. } \lim_{n \to \infty} \left\Vert x - y_n \right\Vert_X = 0 \, .
\end{equation}
\item $f(x) \in \mathbb{R}$ and $f^{**}(x) = f(x)$.
\end{enumerate}
Then $(i) \implies (ii) \implies (iii)$.
Moreover, $(i) \iff (ii)$ whenever $f$ is convex.
\end{theorem}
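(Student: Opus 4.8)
The plan is to pass to the nonnegative function $h := g - \inf g(X)$ on $X$ and to exploit the elementary identity $f^*(\phi + \psi) - f^*(\phi) = h^*(\psi)$ for all $\psi \in \bar{X}^*$, which turns differentiability of $f^*$ at $\phi$ into differentiability of $h^*$ at $0$. Under (i) this is legitimate because right Gâteaux differentiability of $f^*$ at $\phi$ forces $f^*(\phi) \in \mathbb{R}$, hence $\inf g(X) = -f^*(\phi) \in \mathbb{R}$; under (ii) the same follows from properness of $f$ together with $g(x) = \inf g(X)$. Then $h \ge 0$, $\inf h(X) = h^*(0) = 0$, and the candidate derivative $x$ is admissible since $\mathrm{ev}_x \upharpoonright \bar{X}^*$ is continuous and linear by \cref{lem:EvaluationMap}. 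I will use throughout that $X$ and $\bar{X}$ are first-countable, so semicontinuity can be checked sequentially, and that every $\phi \in \bar{X}^*$, being bounded above by a multiple of $\Vert \cdot \Vert_{\bar X}$, is lower semicontinuous on $X$ and upper semicontinuous on $\bar{X}$.

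The implication $(ii) \Rightarrow (iii)$ is quick: from $g(x) = \inf g(X)$ and properness one obtains $f(x) \in \mathbb{R}$ and $f^*(\phi) \in \mathbb{R}$, whence $f^{**}(x) \ge \phi(x) - f^*(\phi) = f(x)$, while $f^{**} \le f$ by \cref{lem:DoubleConjugateIsSmaller}; so $f^{**}(x) = f(x) \in \mathbb{R}$. The core of the theorem is an asymmetric analogue of \cite[Theorem~3.9.1]{src:Zalinescu:ConvexAnalysisInGeneralVectorSpaces}, which I would organise around one ``easy half'': \emph{if $f^*$ is right Fréchet differentiable at $\phi$ with $Df^*(\phi) = x$, then every sequence $(z_n)$ with $h(z_n) \to 0$ satisfies $\Vert x - z_n \Vert_X = \Vert z_n - x \Vert_{\bar X} \to 0$.} To see this, convexity of $f^*$ and $h^*(0) = 0$ make $t \mapsto h^*(t\psi)/t$ nondecreasing with limit $\psi(x)$ at $0^+$, so $h^*(\psi) \ge \psi(x)$ for all $\psi$ and the Fréchet condition becomes $0 \le h^*(\psi) - \psi(x) \le \omega(\Vert \psi \Vert_{\bar X^*})\Vert \psi \Vert_{\bar X^*}$ with $\omega(t) \to 0$ as $t \searrow 0$. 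For each $n$ with $\Vert z_n - x \Vert_{\bar X} > 0$ pick, via the norm attainment \eqref{eq:NormByDualFunctionalsIsAttained} applied in $\bar{X}$, a functional $\psi_n \in \bar{X}^*$ with $\Vert \psi_n \Vert_{\bar X^*} = 1$ and $\psi_n(z_n - x) = \Vert z_n - x \Vert_{\bar X}$; plugging $t\psi_n$ into the estimate and using $h^*(t\psi_n) \ge t\psi_n(z_n) - h(z_n)$ gives $\Vert z_n - x \Vert_{\bar X} \le h(z_n)/t + \omega(t)$, so fixing $t$ with $\omega(t) \le \varepsilon$ and letting $n \to \infty$ yields $\limsup_n \Vert z_n - x \Vert_{\bar X} \le \varepsilon$, hence the claim.

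For $(i) \Rightarrow (ii)$ I would apply the easy half to the minimizing sequence $(y_n)$ from (i) (it has $h(y_n) \to 0$): it then converges to $x$ in $\bar{X}$, and since it also converges to $x$ in $X$ the two-sided control of $\phi$ by $\Vert \cdot \Vert_X$ and $\Vert \cdot \Vert_{\bar X}$ gives $\phi(y_n) \to \phi(x)$, so $f(y_n) = g(y_n) + \phi(y_n) \to \inf g(X) + \phi(x)$; lower semicontinuity of $f$ at $x$ then forces $f(x) \le \inf g(X) + \phi(x)$, i.e.\ $g(x) = \inf g(X)$. Consequently any minimizing sequence of $g$ satisfies $h(z_n) \to 0$ and hence converges to $x$ in $\bar{X}$, which is precisely (ii).

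For $(ii) \Rightarrow (i)$ with $f$ convex, the required minimizing sequence can be taken constant equal to $x$, and lower semicontinuity of $f$ at $x$ is immediate from $f(y_n) = g(y_n) + \phi(y_n) \ge g(x) + \phi(y_n)$ and $\liminf_n \phi(y_n) \ge \phi(x)$. The substantive point is that $f^*$ is right Fréchet differentiable at $\phi$ with $Df^*(\phi) = x$; by the identity and $h^*(\psi) \ge \psi(x)$ (now from $h(x) = 0$) this reduces to $\sup_{y \in X}\big(\psi(y - x) - h(y)\big) \le \varepsilon \Vert \psi \Vert_{\bar X^*}$ for $\Vert \psi \Vert_{\bar X^*}$ small. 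Here convexity enters via a forcing-function argument: well-posedness forces $c(\varepsilon) := \inf\{ h(y) : \Vert y - x \Vert_{\bar X} \ge \varepsilon \} > 0$, and convexity of $h$ with $h(x) = 0$ (comparing $h(y)$ with its value at the point of the segment $[x,y]$ at $\bar{X}$-distance $2\varepsilon$ from $x$) promotes this to $h(y) \ge \tfrac{c(\varepsilon)}{2\varepsilon}\big(\Vert y - x \Vert_{\bar X} - \varepsilon\big)$ whenever $\Vert y - x \Vert_{\bar X} > \varepsilon$; combined with $\psi(y - x) \le \Vert \psi \Vert_{\bar X^*}\Vert y - x \Vert_{\bar X}$ a short case split on whether $\Vert y-x\Vert_{\bar X}\le\varepsilon$ yields the estimate with $\delta = c(\varepsilon)/(2\varepsilon)$, and the Gâteaux part follows by restricting $\psi$ to rays. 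I expect this convex direction to be the main obstacle — making the segment/forcing-function argument run with the asymmetric quantity $\Vert \cdot \Vert_{\bar X}$ rather than a genuine norm, and, throughout, keeping straight which of $\Vert \cdot \Vert_X$ and $\Vert \cdot \Vert_{\bar X}$ governs which side of the duality.
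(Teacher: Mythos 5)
Your proposal is correct: every step checks out, including the asymmetric bookkeeping ($\Vert x - y \Vert_X = \Vert y - x \Vert_{\bar{X}}$, and $\psi(z) \le \Vert \psi \Vert_{\bar{X}^*} \Vert z \Vert_{\bar{X}}$ for $\psi \in \bar{X}^*$), and the overall architecture --- reduce to the nonnegative function $h = g - \inf g(X)$ with $h^*(\psi) = f^*(\phi+\psi) - f^*(\phi)$, then trade growth of $h$ against smallness of $h^* - \mathrm{ev}_x$ near $0$ --- is the same duality the paper exploits (the paper centres $h$ at $x$, which only moves the $\psi(x)$ term). The execution differs most in $(i) \Rightarrow (ii)$: the paper extracts a modulus $\delta$ from the Fréchet condition, builds the scalar function $\alpha(t) = t \inf\{\epsilon : \delta(\epsilon) \ge t\}$, conjugates it to $\alpha^{\#}$, and passes through the biconjugate $h^{**}$ to obtain the global lower bound $g(y) \ge \alpha^{\#}(\Vert y - x \Vert_{\bar{X}}) + \inf g(X)$, from which convergence of minimising sequences follows because $\alpha^{\#}(s) > 0$ for $s > 0$. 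You avoid biconjugation and the $\alpha^{\#}$ machinery entirely by testing $h^*$ against norming functionals $\psi_n$ supplied by \eqref{eq:NormByDualFunctionalsIsAttained} applied in $\bar{X}$, obtaining $\Vert z_n - x \Vert_{\bar{X}} \le h(z_n)/t + \omega(t)$ directly; this is more elementary and arguably cleaner, at the cost of not producing the quantitative coercivity estimate that the paper's route yields as a byproduct. In the convex direction $(ii) \Rightarrow (i)$ the two arguments are essentially the same idea in different clothing: the paper's $\alpha(t) = \inf_{\Vert y \Vert_{\bar{X}} = t} h(y)$ together with $\alpha(s)/s \le \alpha(t)/t$ and $\alpha^{\#}(s)/s \to 0$ is the abstract version of your explicit affine minorant $h(y) \ge \tfrac{c(\varepsilon)}{2\varepsilon}(\Vert y - x \Vert_{\bar{X}} - \varepsilon)$, and both hinge on well-posedness forcing the infimum of $h$ over $\{ \Vert y - x \Vert_{\bar{X}} \ge \varepsilon \}$ to be positive. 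The remaining pieces ($(ii) \Rightarrow (iii)$, the semicontinuity claims, and the use of the $X$-convergent minimising sequence from $(i)$ to show the infimum is attained at $x$) coincide with the paper's.
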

\begin{remark}
\label{rem:WhyNotLeftDiffability}
The existence of a suitable minimising sequence demanded in $(i)$ is not necessary in the normed setting.
This is due to the fact, that the right differentiability then also implies a left differentiability.
However, if we additionally imposed a suitably generalised left differentiability condition in the asymmetric setting, it would imply that the minimisers in $(ii)$ also converge in $X$ and consequently in $X_s$.
Hence, one would arrive in the normed setting again such that we preferred to keep it this way, even though the hypothesis in $(i)$ might be more difficult to verify.
\end{remark}
The proof is similar to the one in \cite[Theorem 3.9.1]{src:Zalinescu:ConvexAnalysisInGeneralVectorSpaces} which covers the setup of normed vector spaces.
\begin{proof}
$(ii) \implies (iii)$:
We have $g(x) = f(x) - \phi(x)$ and since $f$ is proper, this is not equal to $-\infty$.
Because $x$ minimises $g$, the propriety also implies that $g(x) \neq \infty$.
Hence, $g(x) \in \mathbb{R}$ and thus $f(x) \in \mathbb{R}$.
We also obtain
\begin{equation}
f^* \left( \phi \right)
=
\sup_{y \in X} \left[ \phi \left( y \right) - f \left( y \right) \right]
=
- \inf g \left( X \right)
=
- g \left( x \right)
=
\phi \left( x \right) - f \left( x \right) \, .
\end{equation}
Consequently, $f (x) = \phi(x) - f^*( \phi )$ such that $f(x) = f^{**}(x)$ by \cref{lem:DoubleConjugateIsSmaller}.\\

The above also proves that in case $(ii)$, $f^*(\phi) \in \mathbb{R}$ which is also trivially true in case $(i)$.
Hence, for the remainder of the proof, we may define the function
\begin{equation}
h : X \to \overline{\mathbb{R}}, \qquad y \mapsto
f \left( x + y \right) + f^* \left( \phi \right) - \phi \left( x + y \right)
=
g \left( x + y \right) - \inf g \left( X \right) \, .
\end{equation}
and obtain the conjugate function
\begin{equation}
\begin{aligned}
h^*(\psi)
&=
\sup_{y \in X} \left[ \left( \phi + \psi \right) \left( x + y \right) - f \left( x + y \right) \right]
- f^* \left( \phi \right) - \psi \left( x \right) \\
&=
f^* \left( \phi + \psi \right) - f^* \left( \phi \right) - \psi \left( x \right) \, .
\end{aligned}
\end{equation}

$(i) \implies (ii)$:
By the Fréchet differentiability of $f^*$ at $\phi$, there exists a function $\delta : (0, \infty) \to (0, \infty]$ such that
\begin{equation}
\left\Vert \psi \right\Vert_{\bar{X}^*} \le \delta \left( \epsilon \right)
\implies
\left| h^* \left( \psi \right) \right|
\le
\epsilon \left\Vert \psi \right\Vert_{\bar{X}^*}
\end{equation}
for all $\epsilon > 0$ and $\psi \in \bar{X}^*$.
Let us define the function
\begin{equation}
\alpha : \mathbb{R}_{\ge 0} \to [0, \infty], \qquad t \mapsto t \inf \left \{ \epsilon > 0 : \delta \left( \epsilon \right) \ge t \right \} \, .
\end{equation}
It follows that $| h^* ( \psi )|  \le \alpha ( \Vert \psi \Vert_{\bar{X}^*} )$ for all $\psi \in \bar{X}^*$.
Also, $\alpha( t ) \le \epsilon t$ for all $\epsilon > 0$ with $\delta( \epsilon ) \ge t$. 
Since $\alpha(0) = 0$, the following conjugate function $\alpha^\#$ of $\alpha$ is non-negative with
\begin{equation}
\alpha^{\#} : [0, \infty) \to [0, \infty], \qquad
s \mapsto \sup_{t \ge 0} \left[
t s - \alpha \left( t \right)
\right]
\end{equation}
for all $s \ge 0$.
$\alpha^{\#}$ is obviously monotonically increasing and moreover, for any $s > 0$, we have
\begin{equation}
\label{eq:AlphaHashIsPositiveForPositiveArguments}
\alpha^{\#} \left( s \right)
\ge
\sup_{t \in [0, \delta(s/2)]} \left[
t s - \alpha \left( t \right)
\right]
\ge
\sup_{t \in [0, \delta(s/2)]} \left[
t s - \frac{st}{2}
\right]
=
\frac{s}{2} \delta \left( \frac{s}{2} \right)
>
0 \, .
\end{equation}
Using \cref{eq:NormByDualFunctionalsIsAttained}, it is easy to see that $(\alpha \circ \Vert \cdot \Vert_{\bar{X}^*})^* : X \to \overline{\mathbb{R}}$ is equal to $\alpha^{\#} \circ \Vert \cdot \Vert_{\bar{X}}$.
Since $h^* \le \alpha \circ \Vert \cdot \Vert_{\bar{X}^*}$ it then follows that $\alpha^{\#} ( \Vert y \Vert_{\bar{X}} )\le h^{**} ( y )$ for all $y \in X$.
Moreover,
\begin{equation}
\begin{aligned}
h^{**} \left( y  - x \right)
&=
\sup_{\psi \in \bar{X}^*} \left[
\psi \left( y \right)
-
f^* \left( \phi + \psi \right)
\right]
+
f^* \left( \phi \right) \\
&\le
\sup_{\psi \in \bar{X}^*} \left[
\psi \left( y \right)
-
f^* \left( \psi \right)
\right]
+
f^* \left( \phi \right)
-
\phi \left( y \right) \\
&=
f^{**} \left( y \right)
+
f^* \left( \phi \right)
-
\phi \left( y \right)
\end{aligned}
\end{equation}
where the inequality arises because the inclusion $\phi + \bar{X}^* \subseteq \bar{X}^*$ may be proper.
Hence,
\begin{equation}
f \left( y \right)
\ge
f^{**} \left( y \right)
\ge
h^{**} \left( y - x \right)
-
f^* \left( \phi \right)
+
\phi \left( y \right)
\ge
\alpha^{\#} \left( \left\Vert y - x \right\Vert_{\bar{X}} \right)
-
f^* \left( \phi \right)
+
\phi \left( y \right)
\end{equation}
and thus, finally,
\begin{equation}
g \left( y \right)
\ge
\alpha^{\#} \left( \left\Vert y - x \right\Vert_{\bar{X}} \right)
-
f^* \left( \phi \right)
=
\alpha^{\#} \left( \left\Vert y - x \right\Vert_{\bar{X}} \right)
+
\inf g \left( X \right) \, .
\end{equation}
Let $(y_n)_{n \in \mathbb{N}}$ be a sequence in $X$ with $\lim_{n \to \infty} g( y_n ) = \inf g(X)$.
Then,
\begin{equation}
\inf g(X)
=
\lim_{n \to \infty} g \left( y_n \right)
\ge
\limsup_{n \to \infty} \alpha^{\#} \left( \left\Vert y_n - x \right\Vert_{\bar{X}} \right)
+
\inf g(X)
\end{equation}
such that $\limsup_{n \to \infty} \alpha^{\#} \left( \left\Vert y_n - x \right\Vert_{\bar{X}} \right) = 0$.
By the monotonicity of $\alpha^\#$ and \cref{eq:AlphaHashIsPositiveForPositiveArguments} it follows that $\lim_{n \to \infty} \Vert y_n - x \Vert_{\bar{X}} \to 0$.
Now, pick a sequence $(y_n)$ in $X$ with $\lim_{n \to \infty} \Vert y_n - x \Vert = 0$ and $\lim_{n \to \infty} g(y_n) = \inf g(X)$.
Then we also have $\lim_{n \to \infty} \Vert x - y_n \Vert = 0$ and by the lower semicontinuity of $f$
\begin{equation}
\inf g \left( X \right)
=
\liminf_{n \to \infty} g \left( y_n \right)
\ge
f \left( x \right)
- \phi \left( x \right)
- \limsup_{n \to \infty} \phi \left( y_n - x \right)
\ge
g \left( x \right) \, .
\end{equation}

$(ii) \implies (i)$ if $f$ is convex:

The constant sequence $y_n = x$ obviously satisfies $\lim_{n \to \infty} \Vert y_n - x \Vert = 0$ and $\lim_{n \to \infty} g(y_n) = \inf g(X)$.
Let us define the function
\begin{equation}
\alpha : [0,\infty) \to [0,\infty], \qquad
t \mapsto \inf_{\Vert y \Vert_{\bar{X}} = t} \left[ h \left( y \right) \right] \, .
\end{equation}
Clearly, $\alpha(0) = 0$ and it is obvious that $t > 0 \implies \alpha(t) > 0$.
Now let $0<s<t$ and $y\in X$ such that $\Vert y \Vert_{\bar{X}} = t.$
Then
\begin{equation}
\alpha \left( s \right) 
\leq
h \left( \frac{s}{t} y \right)
\leq
\frac{s}{t} h \left( y \right)
\end{equation}
since $h$ is convex and $h(0) = 0$.
Taking the infimum over all such $y$, we obtain 
\begin{equation}
\frac{\alpha(s)}{s} \leq \frac{\alpha(t)}{t}.
\end{equation}
It follows that $\lim_{s\to 0^+} \alpha^\#(s) / s =0.$ 
To see this we give the proof of \cite[Lemma 3.3.1 (iii)]{src:Zalinescu:ConvexAnalysisInGeneralVectorSpaces}:
Take $\epsilon >0$ and let $0\le s \le \alpha(\epsilon) / \epsilon$.
\begin{equation}
    \begin{aligned}
        \alpha^\#(s) &= \sup_{t\ge 0} \left[ ts -\alpha\left( t \right) \right] \\
                    &= \max \left\{\sup_{0\le t \le \epsilon} \left[ t s -\alpha \left( t \right) \right] , \sup_{t\ge \epsilon} \left[t s - \alpha \left( t \right) \right] \right\} \\
                    &\le \max \left\{ s \epsilon , \sup_{t\ge \epsilon} \left[ t \left( s - \frac{\alpha \left( \epsilon \right)}{\epsilon }\right) \right] \right\} \\
                    &= s \epsilon \,.
    \end{aligned}
\end{equation}
By definition $0 \leq \alpha(\Vert y \Vert_{\bar{X}}) \leq h( y )$.
Therefore,
\begin{equation}
h^* \left( \psi \right)
=
f^* \left( \phi + \psi \right) - f^* \left( \phi \right) - \psi \left( x \right)
\le
\alpha^\# \left( \left\Vert \psi \right\Vert_{\bar{X}^*} \right)
\end{equation}
for all $\psi \in \bar{X}^*$.
Moreover,
\begin{equation}
h^* \left( \psi \right)
\ge
\phi \left( x \right) - f \left( x \right) - f^* \left( \phi \right)
=
0.
\end{equation}
This implies
\begin{equation}
0 \le 
\lim_{\Vert \psi \Vert_{\bar{X}^*} \to 0} \frac{f^* \left( \phi + \psi \right)-f^* \left( \phi \right) - \psi \left( x \right)}{\Vert \psi \Vert} 
\leq
\lim_{t \to 0^+}\frac{\alpha^\#(t)}{t} = 0.
\end{equation}
So $f^*$ is indeed right Fréchet differentiable at $\phi$ with $D f^*(\phi) = x$.
That $f$ is lower semicontinuous at $x$ is clear since $g$ is lower semicontinuous at its minimising point $x$ and $\phi$ is lower semicontinuous on all of $X$.
\end{proof}
\section{The Gâteaux Case}
\begin{theorem}
Let $X$ be an asymmetrically normed space and $f : X \to \overline{\mathbb{R}}$ a proper function, $x \in X$ and $\phi \in \bar{X}^*$.
Setting $g = f - \phi$, consider the following statements:
\begin{enumerate}[(i)]
\item $f$ is weakly lower semicontinuous at $x$ and $f^*$ is right Gâteaux differentiable at $\phi$ with $D f^* (\phi) = x$.
Moreover, there exists a sequence $(y_n)_{n \in \mathbb{N}}$ such that
\begin{equation}
\lim_{n \to \infty} g(y_n) = \inf g(X) \qquad\text{and}\qquad \lim_{n \to \infty} y_n = x \text{ weakly in } X \, .
\end{equation}
\item $g(x) =  \inf g(X)$ and for every sequence $(y_n)_{n \in \mathbb{N}}$ in $X$ we have
\begin{equation}
 \lim_{n \to \infty} g(y_n) = g(x) \implies \lim_{n \to \infty} y_n = x \text{ weakly in } \bar{X} \, .
\end{equation}
\item $f(x) \in \mathbb{R}$ and $f^{**}(x) = f(x)$.
\end{enumerate}
Then $(i) \implies (ii) \implies (iii)$.
Moreover, $(i) \iff (ii)$ whenever $f$ is convex.
\end{theorem}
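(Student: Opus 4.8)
The plan is to follow the proof of \cref{thm:FrechetCase} step by step, replacing the norm‑quantified estimates by purely directional ones. The implication $(ii) \implies (iii)$ refers to no topology or notion of differentiability and is verbatim the same: since $x$ minimises $g$ one gets $f(x), f^*(\phi) \in \mathbb{R}$ with $f^*(\phi) = \phi(x) - f(x) = -\inf g(X)$, and then $f(x) = \phi(x) - f^*(\phi) \le f^{**}(x) \le f(x)$ by \cref{lem:DoubleConjugateIsSmaller}. As in case $(ii)$, the value $f^*(\phi)$ is also real in case $(i)$ (it is required to be by the definition of Gâteaux differentiability), so under either hypothesis we may introduce exactly as before the proper function $h : X \to \overline{\mathbb{R}}$, $h(y) = f(x+y) + f^*(\phi) - \phi(x+y) = g(x+y) - \inf g(X)$, which satisfies $h \ge 0$ and whose conjugate is $h^*(\psi) = f^*(\phi + \psi) - f^*(\phi) - \psi(x)$ for $\psi \in \bar{X}^*$. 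The requirement that $f^*$ be right Gâteaux differentiable at $\phi$ with $Df^*(\phi) = x$ then reduces to $\lim_{t \searrow 0} h^*(t\psi)/t = 0$ for every $\psi \in \bar{X}^*$, the map $\mathrm{ev}_x$ being linear and, by \cref{lem:EvaluationMap}, continuous.

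For $(i) \implies (ii)$ I would first show that every minimising sequence $(y_n)_{n \in \mathbb{N}}$ of $g$ converges weakly to $x$ in $\bar{X}$. Fixing $\psi \in \bar{X}^*$ and $t > 0$, the estimate $h^*(t\psi) \ge (t\psi)(y_n - x) - h(y_n - x) = t\,\psi(y_n - x) - (g(y_n) - \inf g(X))$ together with $g(y_n) \to \inf g(X)$ gives $\limsup_{n \to \infty} \psi(y_n - x) \le h^*(t\psi)/t$, and letting $t \searrow 0$ and using the Gâteaux differentiability yields $\limsup_{n \to \infty} \psi(y_n - x) \le 0$; as $\psi$ was arbitrary this is weak convergence to $x$ in $\bar{X}$. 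To obtain $g(x) = \inf g(X)$ I would apply this to the distinguished minimising sequence $(y_n)_{n \in \mathbb{N}}$ supplied by $(i)$: it converges weakly to $x$ both in $\bar{X}$ (by the above) and in $X$ (by hypothesis), so $\limsup_{n \to \infty} \phi(y_n - x) \le 0$ because $\phi \in \bar{X}^*$, while the weak lower semicontinuity of $f$ at $x$ gives $f(x) \le \liminf_{n \to \infty} f(y_n)$; combining these, $g(x) = f(x) - \phi(x) \le \liminf_{n \to \infty} g(y_n) + \limsup_{n \to \infty} \phi(y_n - x) \le \inf g(X)$, and the reverse inequality is trivial. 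The second assertion of $(ii)$ is then exactly the first claim of this paragraph.

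For $(ii) \implies (i)$ under the additional convexity assumption, the constant sequence $y_n = x$ serves as the minimising sequence demanded in $(i)$, and $f$ is weakly lower semicontinuous at $x$ because $g$ is (trivially) weakly lower semicontinuous at its minimiser $x$ and $\phi \in \bar{X}^*$ is weakly lower semicontinuous on all of $X$. The substantial point is the Gâteaux differentiability. Fixing $\psi \in \bar{X}^*$ and writing $\gamma(t) = h^*(t\psi)$ for $t \ge 0$, the function $\gamma$ is convex with $\gamma(0) = 0$ (since $\inf h(X) = 0$) and $\gamma \ge 0$ (since $g(x) = \inf g(X)$, so $h(0) = 0$ and $h^*(t\psi) \ge -h(0) = 0$); hence it suffices to produce, for each $\epsilon > 0$, some $t > 0$ with $h^*(t\psi) \le \epsilon t$, because then the monotonicity of $s \mapsto \gamma(s)/s$ on $(0, t]$ (a consequence of convexity and $\gamma(0) = 0$) forces $\lim_{s \searrow 0} \gamma(s)/s = 0$. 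Arguing by contradiction, suppose $h^*(t\psi) > \epsilon t$ for all $t > 0$; then for each $t > 0$ there is $z_t \in X$ with $\psi(z_t) > \epsilon$ and $0 \le h(z_t) < t(\psi(z_t) - \epsilon)$. Rescaling by $\lambda_t = 1/\max\{1, \psi(z_t) - \epsilon\} \in (0,1]$ and using the convexity of $h$ with $h(0) = 0$ produces $w_t = \lambda_t z_t$ with $\psi(w_t) > \min\{1, \epsilon\}$ and $h(w_t) < t$. Consequently $(x + w_{1/k})_{k \in \mathbb{N}}$ satisfies $g(x + w_{1/k}) = h(w_{1/k}) + \inf g(X) \to \inf g(X)$, so it is minimising, yet $\psi(w_{1/k})$ stays bounded away from $0$, contradicting the weak convergence to $x$ in $\bar{X}$ forced by $(ii)$. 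Thus $\lim_{t \searrow 0} h^*(t\psi)/t = 0$ for every $\psi$, which is the desired Gâteaux differentiability.

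I expect this last step to be the main obstacle. In the Fréchet case all directions are controlled simultaneously by the scalar functions $\alpha$ and $\alpha^{\#}$, whereas here one must work direction by direction, and the naive almost‑minimisers $z_t$ may have $\psi(z_t)$ growing unboundedly as $t \searrow 0$; the normalisation exploiting $h(\lambda z) \le \lambda h(z)$ is what repairs this and is the one genuinely new ingredient, the remainder being routine bookkeeping parallel to \cref{thm:FrechetCase}.
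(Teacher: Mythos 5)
Your proof is correct. The implications $(ii) \implies (iii)$ and $(i) \implies (ii)$ match the paper's argument essentially step for step (the paper phrases the weak-convergence part of $(i) \implies (ii)$ as a contradiction via a subsequence with $\psi(z_n - x) \ge 1$, but the underlying estimate $t \psi(y_n - x) \le h^*(t\psi) + h(y_n - x)$ is the same, as is the use of both weak limits to conclude $g(x) = \inf g(X)$). Where you genuinely diverge is in $(ii) \implies (i)$. The paper fixes $\psi$, assumes the right directional derivative exceeds $\psi(x)$ by some $\mu > 0$, extracts near-maximisers $y_n$ for $f^*(\phi + \psi/n)$, shows $\psi(y_n) \to \infty$, and then runs a two-case analysis on the growth of $\psi(y_n)$ (rescaling by $t_n = \sqrt{\psi(y_n)}$ when $\limsup_n \psi(y_n)/n < \infty$ and by $t_k = \psi(y_{n_k})/\sqrt{n_k}$ otherwise) to manufacture a minimising sequence of $g$ that fails to converge weakly in $\bar{X}$. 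Your normalisation $w_t = \lambda_t z_t$ with $\lambda_t = 1/\max\{1, \psi(z_t) - \epsilon\}$ accomplishes the same in one stroke: convexity of $h$ together with $h(0) = 0$ gives $h(w_t) \le \lambda_t h(z_t) < t$ while $\psi(w_t) > \min\{1, \epsilon\}$, so $(x + w_{1/k})_{k \in \mathbb{N}}$ is a minimising sequence of $g$ violating weak convergence to $x$ in $\bar{X}$; combined with the monotonicity of $t \mapsto h^*(t\psi)/t$ (which plays the role of the paper's appeal to the existence of the one-sided derivative of the convex function $t \mapsto f^*(\phi + t\psi)$) this pins the derivative down. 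Your route is shorter, avoids the case split and the intermediate claims $\psi(y_n) \to \infty$ and $\limsup_n \psi(y_n)/n = \infty$ entirely, and uses the convexity of $f$ in exactly the one place the paper does, namely the rescaling inequality $h(\lambda z) \le \lambda h(z)$.
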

\begin{remark}
As outlined in \cref{rem:WhyNotLeftDiffability}, the existence of a suitable minimising sequence demanded in $(i)$ is not necessary in the normed setting and could be eliminated by additionally imposing a suitable left differentiability as well.
This would however take us to the normed setting again which seemed undesirable to the authors.
\end{remark}
Again, the proof is similar to the one in \cite[Theorem 3.9.1]{src:Zalinescu:ConvexAnalysisInGeneralVectorSpaces} in the setup of normed vector spaces.
\begin{proof}
The proof that $(ii) \implies (iii)$ is just as in the last section.
Moreover, in both cases $(i)$ and $(ii)$ we may define the function
\begin{equation}
h : X \to \overline{\mathbb{R}}, \qquad y \mapsto
f \left( x + y \right) + f^* \left( \phi \right) - \phi \left( x + y \right)
=
g \left( x + y \right) - \inf g \left( X \right)
\end{equation}
with the conjugate function
\begin{equation}
h^*(\psi)
=
f^* \left( \phi + \psi \right) - f^* \left( \phi \right) - \psi \left( x \right) \, .
\end{equation}
$(i) \implies (ii)$:
First, recall that $f^*(\phi) = - \inf g(X)$.
Let $(y_n)_{n \in \mathbb{N}}$ be a sequence in $X$ with $\lim_{n \to \infty} g(y_n) = \inf g(X)$.
If $y_n$ does not converge weakly to $x$ in $\bar{X}$, there exists a $\psi \in \bar{X}^*$ and a subsequence $(z_n)_{n \in \mathbb{N}}$ of $(y_n)$ such that $\psi(z_n - x) \ge 1$ for all $n \in \mathbb{N}$.
Hence, for all $t > 0$ and $n \in \mathbb{N}$,
\begin{equation}
\begin{aligned}
f^* \left( \phi +t \psi \right) - f^* \left( \phi \right) 
&\ge
\phi \left( z_n \right) + t \psi \left( z_n \right) - f \left( z_n \right) + \inf g \left( X \right) \\
&\ge
t \psi \left( x \right) + t - g \left( z_n \right) + \inf g \left( X \right) \, .
\end{aligned}
\end{equation}
By taking the limit $n \to \infty$, it follows that $f^*(\phi+t\psi) -f^*(\phi)\geq t [ \psi(x) + 1 ]$.
This is a contradiction since the right Gâteaux derivative at $\phi$ in the direction of $\psi$ is equal to $x$ by assumption.
Consequently, $y_n$ converges weakly to $x$ in $\bar{X}$.
Now, pick a sequence $(y_n)_{n \in \mathbb{N}}$ with $\lim_{n \to \infty} g (y_n ) = \inf g(X)$ that converges weakly to $x$.
Then by the above considerations, it converges weakly in $\bar{X}$ as well such that
\begin{equation}
\inf g \left( X \right)
=
\liminf_{n \to \infty} g \left( y_n \right)
\ge
f \left( x \right)
- \phi \left( x \right)
- \limsup_{n \to \infty} \phi \left( y_n - x \right)
\ge
g \left( x \right) \, .
\end{equation}

$(ii) \implies (i)$ if $f$ is convex:
Clearly, the constant sequence $y_n = x$ converges weakly to $x$ in $X$ and satisfies $\lim_{n \to \infty} g(y_n) = g(x) = \inf g(X)$.
By assumption, $x$ minimises $g$ such that $f^*(\phi) = - g(x)$.
Hence, fixing any $\psi \in \bar{X}^*$, we have
\begin{equation}
f^* \left( \phi + t \psi \right) - f^* \left( \phi \right) 
\ge
\phi \left( x \right) + t \psi \left( x \right) - f \left( x \right) + f \left( x \right) - \phi \left( x \right) \\
=
t \psi \left( x \right)
\end{equation} 
for every $t>0$.
The function $F : \mathbb{R} \to \overline{\mathbb{R}}, t \mapsto f^* \left( \phi + t \psi \right)$ is clearly convex and proper with $F(0) \in \mathbb{R}$.
Consequently, the following limit exists (see e.g. \cite[Theorem 2.1.5(iii)]{src:Zalinescu:ConvexAnalysisInGeneralVectorSpaces}) and
\begin{equation}
\lim_{t \searrow 0}\frac{1}{t} \left[ f^* \left( \phi + t \psi \right) - f^* \left( \phi \right) \right] \geq \psi(x) \, .
\end{equation}
Assume that the strict inequality holds.
Then there exists a $\mu >0$ such that
\begin{equation}
f^* \left( \phi + t \psi \right) - f^*(\phi)
>
t \left( \psi \left( x \right ) + \mu \right)
\end{equation}
for small enough $t > 0$.
Hence, for sufficiently large $n \in \mathbb{N}$,
\begin{equation}
f^*\left( \phi + \frac{1}{n} \psi \right) - f^* \left( \phi \right) - \frac{1}{n} \psi \left( x \right) 
>
\frac{1}{n} \mu \, .
\end{equation}
Consequently, there exists a sequence $(y_n)$ in $X$ and $N \in \mathbb{N}$ with
\begin{equation}
\begin{aligned}
f^* \left( \phi+\frac{\psi}{n} \right) - f^* \left( \phi \right) - \frac{\psi \left( x \right)}{n}
&\ge
\left[ \left( \phi+\frac{\psi}{n} \right) \left( x + y_n \right) - f \left( x + y_n \right) \right]
+ g \left( x \right)
- \frac{\psi \left( x \right)}{n} \\
&=
\frac{1}{n} \psi \left( y_n \right) - g \left( x + y_n \right) + g \left( x \right)
\ge
\frac{\mu}{n}.
\end{aligned}
\end{equation}
for all $n \in \mathbb{N}_{\ge N}$.
But then also
\begin{equation}
\label{eq:BoundPsiYnOverNFromBelow}
\frac{1}{n} \psi \left( y_n \right)
>
\frac{1}{n} \psi \left( y_n \right) -\frac{\mu}{n}
\ge
g \left ( x + y_n \right) - g \left( x \right)
\ge
0 \, .
\end{equation}
This implies that $\psi( y_n ) \ge \mu > 0$ for all $n \in \mathbb{N}_{\ge N}$.
Consequently, $(y_n)_{n \in \mathbb{N}}$ contains no subsequences that converge weakly to zero in $\bar{X}$.
Let us assume that there is a subsequence $(y_{n_k})_{k \in \mathbb{N}}$ for which $(\psi (y_{n_k}))_{k \in \mathbb{N}}$ is bounded from above.
Then by \cref{eq:BoundPsiYnOverNFromBelow} we have
\begin{equation}
0 
\le
g \left( x + y_{n_k} \right) - g \left( x \right) 
\le
\frac{1}{n_k} \psi \left( y_{n_k} \right) \to 0 \quad (n \to \infty) \, .
\end{equation}
Hence $\lim_{k \to \infty} g( x + y_{n_k} ) = g(x)$ and by assumption $y_{n_k}$ converges weakly to zero in $\bar{X}$.
This contradicts our previous statement that $(y_n)_{n \in \mathbb{N}}$ cannot contain such subsequences.
Therefore, $\lim_{n \to \infty} \psi( y_n ) = \infty$.

Now suppose $\limsup_{n \to \infty} \psi( y_n ) / n < \infty$ and define $t_n =\sqrt{\psi( y_n )}$ as well as $x_n := y_n / t_n$.
Clearly, $\lim_{n \to \infty} \psi( x_n ) = \infty$.
For sufficiently large $n \in \mathbb{N}$, we have $\frac{1}{t_n} < 1$, so that $x + y_n / t_n$ becomes the convex combination
\begin{equation}
x + \frac{y_n}{t_n}
=
\frac{1}{t_n} \left( x + y_n \right) + \frac{t_n -1}{t_n} x \, .
\end{equation}
Since $f$ is convex, $g$ is also convex and we get
\begin{equation}
\begin{aligned}
0
&\le
g \left( x + x_n \right) - g \left( x \right) 
= 
g \left( x + \frac{y_n}{t_n} \right) - g \left( x \right) \\
&\le
\frac{1}{t_n} g \left( x + y_n \right) - \frac{1}{t_n} g \left( x \right)
\le
\frac{\psi \left( y_n \right)}{n t_n} \to 0 \quad (n \to \infty).
\end{aligned}
\end{equation}
This implies that $x_n$ converges weakly to zero in $\bar{X}$ which contradicts $\lim_{n \to \infty} \psi( x_n ) = \infty$.
Therefore, $\limsup_{n \to \infty} \psi( y_n ) / n =\infty$.\\
Fix some increasing and divergent sequence $(n_k)_{k \in \mathbb{N}}$ in $\mathbb{N}$ with $\lim_{n \to \infty} \psi( y_{n_k} ) / n_k = \infty$.
Set $t_k = \psi( y_{n_k} ) / \sqrt{n_k}$ and $x_k= y_{n_k} / t_k$.
As before, $\lim_{k \to \infty} t_k = \infty = \lim_{k \to \infty} \psi( x_k )$ and we obtain the convex combination
\begin{equation}
x + x_k
=
x + \frac{y_{n_k}}{t_k}
=
\frac{1}{t_k} \left( x + y_{n_k} \right) + \frac{t_k -1}{t_k} x
\end{equation}
for large enough $k \in \mathbb{N}$.
Consequently,
\begin{equation}
\begin{aligned}
0
&\le
g \left( x + x_k \right) - g \left( x \right)
=
g \left( x + \frac{y_{n_k}}{t_k} \right) - g \left( x \right) \\
&\le
\frac{1}{t_k} g \left( x + y_{n_k} \right) - \frac{1}{t_k} g \left( x \right)
\le
\frac{\psi \left( y_{n_k} \right)}{n_k t_k}
=
\frac{1}{\sqrt{n_k}}
\to 0 \quad (k \to \infty) \, .
\end{aligned}
\end{equation}
This implies that $x_k$ converges weakly to zero in $\bar{X}$ which contradicts $\lim_{k \to \infty} \psi( x_k ) = \infty$.
It follows that there cannot be a $\mu > 0$ as assumed and, in fact,
\begin{equation}
\lim_{t \searrow 0}\frac{1}{t} \left[ f^* \left( \phi + t \psi \right) - f^* \left( \phi \right) \right] = \psi(x) \, .
\end{equation}
Since $\psi \in \bar{X}^*$ was arbitrary, $f^*$ is right Gâteaux differentiable at $\phi$ with $D f^*(\phi) = x$.
That $f$ is weakly lower semicontinuous at $x$ follows since $g$ is weakly lower semicontinuous at its minimising point $x$ and $\phi$ is weakly lower semicontinuous on all of $X$.
\end{proof}

\section{Example}
Finally, we present two examples where the above theorems apply.
\begin{example}
Let $l^2$ be the Hilbert space of real-valued square-summable sequences and consider the asymmetric norm
\begin{equation}
p \left( x \right)
=
\sqrt{
\sum_{n = 1}^{\infty}
\max \{ 0, x_n \}^2
}
\end{equation}
and set $f(x) = \bar{p}(x)^2$ for all $x \in l^2$.
Then $f$ is clearly convex and proper.
Setting $X = (l^2, p)$, it is easy to see that
\begin{equation}
\bar{X}^*
=
\left\{
x \in l^2 \mid
\forall n \in \mathbb{N} :
x_n < 0
\right\}
\end{equation}
with the standard pairing $\langle\cdot,\cdot\rangle$ given by the $l^2$ inner product.
Moreover, the norm $\Vert \cdot \Vert_{\bar{X}^*}$ is simply equal to $\bar{p}$.
Letting $y \in \bar{X}^*$ be arbitrary, we have
\begin{equation}
f^* \left( y \right)
=
\sup_{x \in l^2}
\left[ \left\langle y, x \right\rangle - \bar{p} \left( x \right)^2 \right]
=
\sup_{r \ge 0}
\sup_{\substack{x \in l^2\\\bar{p}(x) = r}}
\left[ \left\langle y, x \right\rangle - \bar{p} \left( x \right)^2 \right]
=
\sup_{r \ge 0}
\left[ r \bar{p} \left( y \right) - r^2 \right] \\
=
\frac{\bar{p} \left( y \right)^2}{4} \, .
\end{equation}
With $g(x) = \bar{p}(x)^2 - \langle y, x \rangle$, we see that $g$ is minimised by
\begin{equation}
g \left( \frac{y}{2} \right)
=
- \frac{1}{4} \bar{p} \left( y \right)^2 \, .
\end{equation}
At the same time, we have
\begin{equation}
\begin{aligned}
g \left( x \right)
&=
- \frac{1}{4} \bar{p} \left( y \right)^2
+ \sum_{n = 1}^\infty \left[
\frac{ y_n^2 }{4}
- y_n x_n
+ \max \{ 0, -x_n \}^2
\right] \\
&=
- \frac{1}{4} \bar{p} \left( y \right)^2
+ \sum_{n = 1}^\infty
\begin{cases}
\left( \frac{\left| y_n \right|}{2} - \left| x_n \right| \right)^2 & \text{if } x_n < 0 \\
\frac{ y_n^2 }{4}
+ \left| y_n \right| \left| x_n \right| & \text{if } x_n \ge 0
\end{cases} \\
&\ge
- \frac{1}{4} \bar{p} \left( y \right)^2
+ \sum_{n = 1}^\infty
\begin{cases}
\left( \frac{y_n}{2} - x_n \right)^2 & \text{if } x_n < 0 \\
0 & \text{if } x_n \ge 0
\end{cases}
\ge
\bar{p} \left( x - \frac{y}{2} \right)^2
+ \inf g \left( l^2 \right)
\end{aligned}
\end{equation}
for every $x \in l^2$.
Consequently, every minimising sequence converges to $y/2$ in $\bar{X}$ such that \cref{thm:FrechetCase} $(ii)$ is satisfied.
By the convexity of $f$ it follows that $f$ is lower semicontinuous at $y/2$ and that $f^*$ is right Frechet differentiable at $y$.
Moreover, since $y$ was arbitrary, these properties hold on all of $\bar{X}$.
\end{example}
The second example is more elaborate and studies properties of moment-generating functions of a measure in a very general setting.
Due to the asymmetry of moment-generating functions, the asymmetrically normed vector space becomes the natural setting as is demonstrated in the following.
\begin{example}
Let $X$ be a real locally convex space and $\mu$ a probability measure on the Borel sets of $X$ that is scalarly of first order, i.e. $X^* \subseteq L^1(\mu)$ and of mean zero, i.e. $\int_X \phi \mathrm{d} \mu = 0$ for all $\phi \in X^*$.
Let $[\dots]_\mu$ denotes the equivalence class of functions equal $\mu$-almost everywhere and define the vector space
\begin{equation}
\mathcal{K}(\mu)
=
\left\{
\left[ \phi \right]_\mu : \phi \in X^*
\right\} \, ,
\end{equation}
as well as its completion $\mathcal{M}(\mu)$ in the topology of convergence in $\mu$-measure.
Here, it should be noted that unless $\mu$ is \enquote{scalarly centred at zero} \cite[Theorem 2]{src:Chevet:KernelOfCylindricalMeasures}, there will be elements in $\mathcal{M}(\mu)$ that are constant and non-zero $\mu$-almost everywhere.
Now, define the cumulant-generating function $V_\mu : \mathcal{K}(\mu) \to \overline{\mathbb{R}}$ with
\begin{equation}
\label{eq:VMuDefinition}
T \mapsto \ln \int_X \exp \left[ T \right] \mathrm{d} \mu
\end{equation}
and let $\overline{W}_\mu$ denote the greatest lower semicontinuous function on $\mathcal{M}(\mu)$ whose restriction to $\mathcal{K}(\mu)$ is smaller than $V_\mu$, i.e. with $\mathrm{epi}\, \overline{W}_\mu = \mathrm{cl}\, \mathrm{epi}\, V_\mu$.
Finally, define $\mathcal{L}(\mu) = \mathcal{M}(\mu) \cap L^1(\mu)$, let $W_\mu = \overline{W}_\mu \upharpoonright \mathcal{L}(\mu)$ and endow $\mathcal{L}(\mu)$ with the asymmetric norm $p : \mathcal{L}(\mu) \to [0, \infty)$ given by
\begin{equation}
T \mapsto \int_X \max \left\{ -T, 0 \right\} \mathrm{d} \mu \, .
\end{equation}
Then it is easy to see that
\begin{enumerate}
\item $V_\mu$, $W_\mu$ and $\overline{W}_\mu$ coincide on $\mathcal{K}(\mu)$,
\item $T \in \mathrm{dom}\, \overline{W}_\mu \implies \overline{W}_\mu(T) = \ln \int_X \exp[T] \mathrm{d} \mu$,
\item $T \in \mathrm{dom}\, \overline{W}_\mu \implies T \in \mathcal{L}(\mu)$ and $\int_X T \mathrm{d} \mu \ge 0$,
\item $\mathrm{gr}\, V_\mu$ is dense in $\mathrm{gr}\, W_\mu$ equipped with the subspace topology of $(\mathcal{L}(\mu), \bar{p}_\mu) \times \mathbb{R}$,
\item $W_\mu \ge \max \{ \ln \Vert \cdot \Vert_{L^1(\mu)}, 0 \}$.
\end{enumerate}
The reason behind this construction is to obtain a space that is sufficiently \enquote{complete} to have minimisers and has good coercivity properties.
Moreover, $W_\mu^*$ can be determined by taking the supremum over $\mathcal{K}(\mu)$ instead of $\mathcal{L}(\mu)$.

With the $L^1(\mu)$-coercivity at hand, it is useful to consider the vector space $(\mathcal{L}(\mu)_s)^*$ with dual norm denoted by $\Vert \cdot \Vert_s^*$.
Then we obtain the following.
\begin{theorem}
Let $y \in \mathrm{dom}\, W_\mu^*$ such that for some $\epsilon > 0$,
\begin{equation}
\sup\left\{ W_\mu^* \left( y + z \right) \middle| z \in (\mathcal{L}(\mu), \bar{p})^*: \left\Vert z \right\Vert_s^* < \epsilon \right\}
<
\infty \, .
\end{equation}
Then there is a $T \in \mathcal{L}(\mu)$ at which the infimum of $W_\mu - y$ is attained and every minimising sequence $(T_n)_{n \in \mathbb{N}}$ of $W_\mu - y$ converges $\bar{p}$-weakly to $T$.
\end{theorem}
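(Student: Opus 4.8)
The statement is exactly condition $(ii)$ of the Gâteaux-case theorem of the previous section, applied with $f = W_\mu$, $\phi = y$ and $g = W_\mu - y$; as $W_\mu$ is convex and proper, it suffices to establish it directly. Write $X = (\mathcal{L}(\mu), p)$, so that $(\mathcal{L}(\mu), \bar{p})^* = \bar{X}^* = -X^*$, and note $\inf g(X) = -W_\mu^*(y) \in \mathbb{R}$ because $W_\mu(0) = V_\mu(0) = 0$. The first step is a coercivity estimate: writing $M$ for the finite supremum in the hypothesis and fixing $\epsilon' \in (0, \epsilon)$, whenever $z \in \bar{X}^*$ with $\Vert z \Vert_s^* < \epsilon$ we have $y + z \in \bar{X}^*$ and $W_\mu^*(y + z) \le M$, so \cref{lem:DoubleConjugateIsSmaller} gives, for all $T \in \mathcal{L}(\mu)$,
\begin{equation*}
W_\mu(T) \ge W_\mu^{**}(T) \ge \sup_{\substack{z \in \bar{X}^* \\ \Vert z \Vert_s^* < \epsilon}} \left[ \left( y + z \right)(T) - M \right] = y(T) - M + \sup_{\substack{z \in \bar{X}^* \\ \Vert z \Vert_s^* < \epsilon}} z(T) \ge y(T) - M + \epsilon' \bar{p}(T) \, ,
\end{equation*}
where the last step uses $\Vert \cdot \Vert_s^* \le \Vert \cdot \Vert_{\bar{X}^*}$ together with \cref{eq:NormByDualFunctionalsIsAttained}. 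Hence $g(T) \ge \epsilon' \bar{p}(T) - M$ for every $T$. In particular every minimising sequence $(T_n)_{n \in \mathbb{N}}$ of $g$ is bounded in $\bar{p}$; then $W_\mu(T_n) = g(T_n) + y(T_n)$ is bounded above, so $(T_n)$ is bounded in $L^1(\mu)$ (since $W_\mu \ge \max\{\ln\Vert\cdot\Vert_{L^1(\mu)}, 0\}$) and $\int_X \exp[T_n]\,\mathrm{d}\mu = \exp[W_\mu(T_n)]$ is bounded, so $\{T_n^+\}_{n \in \mathbb{N}}$ is uniformly integrable by the de la Vallée Poussin criterion.

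Next I would produce and identify the minimiser. Apply the theorem of Komlós to an $L^1(\mu)$-bounded minimising sequence; after passing to a subsequence, its Cesàro means $S_N$ converge $\mu$-almost everywhere, hence in $\mu$-measure, to some $T$. Convexity of $W_\mu$ keeps $(S_N)_{N}$ minimising and, via $S_N^+ \le \tfrac1N\sum_{k=1}^N T_{n_k}^+$, makes $\{S_N^+\}_N$ uniformly integrable; Fatou's lemma and the closedness of $\mathcal{M}(\mu)$ under convergence in measure give $T \in \mathcal{M}(\mu) \cap L^1(\mu) = \mathcal{L}(\mu)$; and from $(S_N - T)^+ \le S_N^+ + T^-$ and Vitali's theorem, $\bar{p}(S_N - T) = \int_X (S_N - T)^+\,\mathrm{d}\mu \to 0$. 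Lower semicontinuity of $\overline{W}_\mu$ for convergence in measure and upper semicontinuity of $y$ on $(\mathcal{L}(\mu), \bar{p})$ then yield $g(T) \le \liminf_N g(S_N) = \inf g(X)$, so $T$ attains the infimum. For uniqueness, if $T_1 \neq T_2$ both minimise then $g$, hence $W_\mu = g + y$, is affine on $[T_1, T_2]$, so the equality characterisation of Hölder's inequality applied to $\exp[T_1]$, $\exp[T_2]$ forces $T_1 - T_2$ to be a non-zero constant $c$ $\mu$-almost everywhere; assuming $c > 0$ (swap $T_1, T_2$ otherwise), $g(T_1 + nc) = g(T_1) + n(c - y(c))$ must equal $\inf g(X)$ for all $n$, forcing $y(c) = c$, so $g(T_1 + nc) = \inf g(X)$ while $\bar{p}(T_1 + nc) \to \infty$, contradicting the coercivity estimate.

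It remains to prove the convergence of minimising sequences. Let $(T_n)_{n \in \mathbb{N}}$ be minimising, $z \in \bar{X}^*$, and suppose $\limsup_n z(T_n - T) = \delta > 0$. Passing to a subsequence along which $z(T_n - T) \ge \delta/2$ and re-running the construction above on it (it too is a minimising sequence), a subsequence of the associated Cesàro means $S_N$ converges — $\mu$-almost everywhere and in $\bar{p}$ — to a minimiser, which must be $T$ by uniqueness, while $z(S_N) \ge z(T) + \delta/2$. Since $z$ is upper semicontinuous on $(\mathcal{L}(\mu), \bar{p})$, this gives $z(T) + \delta/2 \le \limsup_N z(S_N) \le z(T)$, a contradiction. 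Hence $\limsup_n z(T_n - T) \le 0$ for every $z \in \bar{X}^*$, i.e. $(T_n)$ converges $\bar{p}$-weakly to $T$.

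The main obstacle is the complete absence of control on the negative parts $T_n^-$: only the positive parts of a minimising sequence are uniformly integrable, so $p(T_n - T)$ need not vanish and no convergence in $X$ itself can be expected, which is precisely why the conclusion is only $\bar{p}$-weak — consistent with \cref{rem:WhyNotLeftDiffability} — and why the compactness input has to be the theorem of Komlós, requiring merely $L^1(\mu)$-boundedness, rather than a Dunford–Pettis (uniform integrability) argument. The second delicate point is the uniqueness of the minimiser, where the non-zero constants that may sit in $\mathcal{M}(\mu)$ have to be ruled out through the coercivity estimate.
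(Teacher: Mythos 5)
Your overall strategy coincides with the paper's: the coercivity estimate extracted from the hypothesis, $L^1(\mu)$-boundedness of minimising sequences, Komlós plus Cesàro means, uniform integrability of the positive parts and Vitali's theorem to get $\bar p$-convergence of the Cesàro means, the Hölder-equality argument showing that any two minimisers differ by a constant, and the final subsequence argument for weak convergence. The one place you genuinely diverge is the claim that the minimiser is \emph{unique}, and this step has a gap. You argue that if $T_1$ and $T_2 = T_1 + c$ (with $c > 0$ constant) both minimise, then $g(T_1 + nc) = g(T_1) + nc(1 - y(1))$ for all $n$, deduce $y(1) = 1$, and contradict coercivity by letting $n \to \infty$. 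But the identity $W_\mu(T_1 + nc) = W_\mu(T_1) + nc$ presupposes $T_1 + nc \in \mathrm{dom}\, \overline{W}_\mu$, and $\mathrm{dom}\, \overline{W}_\mu$ is defined via the closure of $\mathrm{epi}\, V_\mu$ over $\mathcal{K}(\mu)$ --- it is \emph{not} the set of all $T$ with $\int_X \exp[T]\,\mathrm{d}\mu < \infty$, and nothing in the construction makes it invariant under translation by constants (indeed, item 3 of the listed properties forces $\int_X T\,\mathrm{d}\mu \ge 0$ on the domain, so it is certainly not invariant in the negative direction). If $T_1 + nc \notin \mathrm{dom}\, W_\mu$ for $n > 1$, then $g(T_1 + nc) = \infty$ and no contradiction with coercivity arises; convexity only gives that $g$ is constant on the segment from $T_1$ to $T_2$, not on the whole ray.

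This matters because uniqueness is load-bearing in your argument: you use it to identify every subsequential Cesàro limit with your $T$, and without it a second minimising (sub)sequence could have Cesàro limit $T' = T + c'$ with $c' > 0$, in which case $\bar p(S_N - T) \not\to 0$ and your weak-convergence step collapses. The paper circumvents this by characterising the full minimising set as $M = \{ S + c : c \in I \}$ for a bounded interval $I$ with $0 \le \sup I \in I$, and by choosing $T = S + \sup I$, the \emph{maximal} minimiser; then every subsequential Cesàro limit $S'$ satisfies $S' \le T$ $\mu$-almost everywhere, so $\max\{R_n - T, 0\} \le \max\{R_n - S', 0\} \to 0$ in $L^1(\mu)$ and the weak convergence to this particular $T$ follows. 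To repair your version you would either have to show that $\mathrm{dom}\, \overline{W}_\mu$ is stable under adding the constant $c$ (which the given construction does not supply) or simply adopt the paper's choice of the top of the minimising interval. The remainder of your write-up, including the cleaner derivation of the coercivity bound $g(T) \ge \epsilon' \bar p(T) - M$ and its upgrade to $L^1(\mu)$-boundedness via $W_\mu \ge \ln \Vert \cdot \Vert_{L^1(\mu)}$, is sound and matches the paper's intent.
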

\begin{proof}
From the choice of $y$, we have with $B^{s\ast}_\epsilon(0) = (\Vert \cdot \Vert_s^*)^{-1}([0,\epsilon))$,
\begin{equation}
\sup_{z \in B^{s\ast}_\epsilon(0)}
\sup_{\phi \in \mathcal{L}(\mu)}
\left[
\left( y + z \right) \left( \phi \right)
-
W_\mu \left( \phi \right)
\right]
=
\sup_{\phi \in \mathcal{L}(\mu)}
\left[
\epsilon \left\Vert \phi \right\Vert_{L^1(\mu)}
-
\left( W_\mu - y \right) \left( \phi \right)
\right]
<
\infty \, .
\end{equation}
Consequently, $W_\mu - y$ is $L^1(\mu)$-norm coercive.
Hence, the desired infimum is finite and any minimising sequence $(T_n)_{n \in \mathbb{N}}$ is $L^1(\mu)$-bounded.
Hence, by Komlós' theorem, for every subsequence of $(T_n)_{n \in \mathbb{N}}$, there is some $S \in L^1(\mu)$ and a further subsequence $(S_n)_{n \in \mathbb{N}}$ such that
\begin{equation}
R_n := \frac{1}{n} \sum_{m = 1}^n S_m \to T \text{ $\mu$-almost everywhere}.
\end{equation}
Since $W_\mu$ is convex, $(R_n)_{n \in \mathbb{N}}$ is a minimising sequence as well and because $\overline{W}_\mu$ is lower semicontinuous,
\begin{equation}
\liminf_{n \to \infty} W_\mu \left( R_n \right)
\ge
\overline{W}_\mu \left( S \right) \, .
\end{equation}
However, then $S \in \mathcal{L}(\mu)$ and since $y$ is upper-semicontinuous with respect to convergence in measure,
\begin{equation}
\lim_{n \to \infty} \left[ W_\mu \left( R_n \right) - y \left( R_n \right) \right]
=
W_\mu \left( S \right) - y \left( S \right)
=
\ln \int_X \exp \left[ S \right] \mathrm{d} \mu - y \left( S \right) \, .
\end{equation}
Consequently, $\max\{ R_n, 0 \}$ is uniformly $\mu$-integrable and since $\max\{ a + b, 0 \} \le \max\{ a, 0 \} + \max\{ b, 0 \}$ for all $a, b \in \mathbb{R}$, we have
\begin{equation}
\max \left\{ R_n - S, 0 \right\}
\le
\max\{ R_n, 0 \}
+
\max\{ -S, 0 \} \, .
\end{equation}
Since $S \in L^1(\mu)$, $\max \left\{ R_n - S, 0 \right\}$ is uniformly $\mu$-integrable and by Vitali's convergence theorem
\begin{equation}
\lim_{n \to \infty} \bar{p} \left( R_n - S \right)
=
\lim_{n \to \infty} \int_X \max \left\{ R_n - S, 0 \right\} \mathrm{d} \mu
=
0 \, .
\end{equation}
Let $M \subseteq \mathcal{L}(\mu)$ denote the set of minimisers of $W_\mu - y$ and let $S' \in M$.
Then, the function
\begin{equation}
f : [0,1] \to \mathbb{R}, t \mapsto W_\mu \left( \left[ 1 - t \right] S + t S' \right)
=
\ln \int_X \exp \left[ \left( 1 - t \right) S + t S' \right] \mathrm{d} \mu \, .
\end{equation}
is twice differentiable on the open interval $(0,1)$ such that for some $t \in (0,1)$,
\begin{equation}
W_\mu \left( S' \right) - y \left( S' \right)
=
W_\mu \left( S \right) - y \left( S \right)
+
\frac{1}{2} f'' \left( t \right) \, ,
\end{equation}
i.e. $f'' ( t ) = 0$.
Writing $U = (1 - t) S + t S'$ for brevity, we obtain
\begin{equation}
f'' \left( t \right)
=
\frac{\int_X \left( S' - S \right)^2 \exp \left[ U \right] \mathrm{d} \mu}{\int_X \exp \left[ U \right] \mathrm{d} \mu}
- \left( \frac{\int_X \left( S' - S \right) \exp \left[ U \right] \mathrm{d} \mu}{\int_X \exp \left[ U \right] \mathrm{d} \mu} \right)^2
=
0 \, .
\end{equation}
Since this amounts to Hölder's inequality with the functions $S' - S$ and $1$ becoming an equality, we conclude that that $S'$ and $S$ differ by a constant $\mu$-almost everywhere.
Hence, by the convexity of $W_\mu - y$, there exists an interval $I \subseteq \mathbb{R}$ such that
\begin{equation}
M = \left\{ S + c : c \in I \right\} \, .
\end{equation}
Since $W_\mu - y$ is $L^1(\mu)$-coercive, $I$ is bounded.
Moreover, by the lower semicontinuity of $W_\mu - y$, it is immediate that $0 \le \sup I \in I$.
Hence, setting $T = S + \sup I$, we have
\begin{equation}
\lim_{n \to \infty} \int_X \max \left\{ R_n - T, 0 \right\} \mathrm{d} \mu
=
0 \, .
\end{equation}
Consequently, $(T_n)_{n \in \mathbb{N}}$ converges $\bar{p}$-weakly to $T$.
\end{proof}
The striking conclusion is that $W_\mu^*$ is right-Gâteaux differentiable at $y$ with derivative equal to $T$ even though the underlying space $\mathcal{L}(\mu)$ is not even $T_1$ in general.
This generalises a result in \cite{src:Ziebell:RigorousFRG} where it was found that for certain measures $\nu$ being absolutely continuous to Gaußian measures, the ordinary vector space setting suffices and one obtains strong differentiability properties of the corresponding functions $W_\nu^*$.
\end{example}
\printbibliography
\end{document}